\documentclass[11pt]{article}
\usepackage[utf8]{inputenc} 
\usepackage[T1]{fontenc} 
\usepackage{amsmath}
\usepackage{amsthm}
\usepackage{amsfonts}
\usepackage{amssymb}
\usepackage{lmodern} 
\usepackage{tikz}
\usetikzlibrary{arrows}

\usepackage{pgf,tikz,pgfplots}
\usepackage{mathrsfs}

\definecolor{ffqqqq}{rgb}{1,0,0}
\definecolor{wrwrwr}{rgb}{0.3803921568627451,0.3803921568627451,0.3803921568627451}
\definecolor{ffqqqq}{rgb}{1,0,0}
\definecolor{wrwrwr}{rgb}{0.3803921568627451,0.3803921568627451,0.3803921568627451}
\definecolor{rvwvcq}{rgb}{0.08235294117647059,0.396078431372549,0.7529411764705882}
\definecolor{cqcqcq}{rgb}{0.7529411764705882,0.7529411764705882,0.7529411764705882}
\definecolor{ffffff}{rgb}{1,1,1}
\definecolor{ccqqqq}{rgb}{0.8,0,0}

\usepackage[a4paper]{geometry}
\usepackage{cite}
\usepackage[english]{babel}

\newtheorem{theorem}{Theorem}[section]
\newtheorem{lemma}[theorem]{Lemma}
\newtheorem{proposition}[theorem]{Proposition}

\newcommand{\N}{{\mathbb N}}

\newcommand{\R}{{\mathbb R}}

\newcommand{\Int}{\mbox{Int}}
\newcommand{\Vol}{\mbox{Vol}}

\begin{document}

\title{Algebraic intersection for translation surfaces in the stratum $\mathcal{H}(2)$ \\
Intersection algébrique dans la strate $\mathcal{H}(2)$}
\author{ Smaïl Cheboui, Arezki Kessi, Daniel Massart}
\date{\today}
\maketitle
\begin{abstract}
Nous étudions la quantité $\mbox{KVol}$ définie par l'équation  (\ref{defKVol}) sur la strate  $\mathcal{H}(2)$ des surfaces de  translation de genre  $2$, avec une singularité conique. Nous donnons une suite explicite de surfaces $L(n,n)$ telles que  $\mbox{KVol}(L(n,n)) \longrightarrow 2$ quand $n$ tend vers l'infini,  $2$ étant l'infimum-conjectural-de $\mbox{KVol}$ sur $\mathcal{H}(2)$.

We study the quantity $\mbox{KVol}$ defined in Equation (\ref{defKVol}) on the stratum $\mathcal{H}(2)$ of translation surfaces of genus $2$, with one conical point. We provide an explicit sequence $L(n,n)$ of surfaces such that $\mbox{KVol}(L(n,n)) \longrightarrow 2$ when $n$ goes to infinity, $2$ being the conjectured infimum for  $\mbox{KVol}$ over $\mathcal{H}(2)$.

\end{abstract}
\section{Introduction}
Let $X$  be a closed surface, that is, a compact, connected manifold of dimension 2, without boundary. Let us assume that $X$ is oriented. Then the algebraic intersection of closed curves in $X$ endows the first homology $H_1(X,\R)$ with an antisymmetric, non degenerate,  bilinear form, which we denote $\mbox{Int}(.,.)$. 

Now let us assume $X$ is endowed with a Riemannian metric $g$. We denote $\mbox{Vol}(X,g)$ the Riemannian volume of $X$ with respect to the metric $g$, and for any piecewise smooth closed curve $\alpha$ in $X$, we denote $l_g(\alpha)$ the length of $\alpha$ with respect to $g$. When there is no ambiguity we omit the reference to $g$. 

We are interested in the quantity
\begin{equation}\label{defKVol}
\mbox{KVol}(X,g) = \Vol (X,g) \sup_{\alpha,\beta} \frac{\Int (\alpha,\beta)}{l_g (\alpha) l_g (\beta)}
\end{equation}
where the supremum ranges over all piecewise smooth closed curves $\alpha$ and $\beta$ in $X$. The $\Vol (X,g)$ factor is there to make $\mbox{KVol}$ invariant to re-scaling of the metric $g$. See \cite{MM} as to why $\mbox{KVol}$ is finite. It is easy to make $\mbox{KVol}$ go to infinity, you just need to pinch a non-separating closed curve $\alpha$ to make its length go to zero. The interesting surfaces are those $(X,g)$ for which $\mbox{KVol}$ is small. 

When $X$ is the torus, we have $\mbox{KVol} (X,g) \geq 1$, with equality if and only if the metric $g$ is flat (see \cite{MM}). Furthermore, when $g$ is flat, the supremum in (\ref{defKVol}) is not attained, but for a negligible subset of the set of all flat metrics.   In \cite{MM} $\mbox{KVol}$ is studied as a function of $g$, on the moduli space of hyperbolic  (that is, the curvature of $g$ is $-1$) surfaces of fixed genus. It is proved that $\mbox{KVol}$ goes to infinity when $g$ degenerates by pinching a non-separating closed curve, while $\mbox{KVol}$ remains bounded when $g$ degenerates by pinching a separating closed curve.

This leaves open the question whether $\mbox{KVol}$ has a minimum over the moduli space of hyperbolic surfaces of genus $n$, for $n \geq 2$. It is conjectured in \cite{MM} that for almost every $(X,g)$ in the moduli space of hyperbolic surfaces of genus $n$, the supremum in (\ref{defKVol})  is  attained (that is, it is actually a maximum). 

In this paper we consider a different class of surfaces : translation surfaces of genus 2, with one conical point. The set (or stratum) of such surfaces is denoted $\mathcal{H}(2)$ (see \cite{HL}). 
By  \cite{Mc}, any surface $X$ in the stratum $\mathcal{H}(2)$ may be unfolded as shown in Figure \ref{unfolding}, with complex parameters $z_1, z_2, z_3, z_4$. The surface is obtained from the plane template by identifying parallel sides of equal length. 
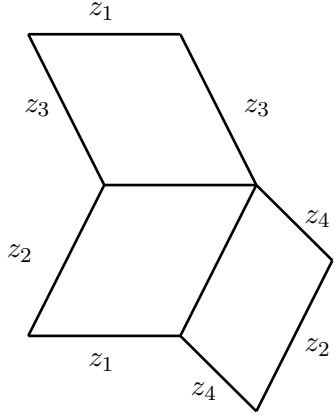
\begin{figure}
\begin{center}
\begin{tikzpicture}[scale=1]
\draw [line width=1pt] (0,0)-- (2,0);
\draw [line width=1pt]   (1,2)-- (3,2);
\draw [line width=1pt] (0,0)-- (1,2);
\draw [line width=1pt] (2,0)-- (3,2);
\draw [line width=1pt] (2,0)-- (3,-1);
\draw [line width=1pt] (3,-1)-- (4,1);
\draw [line width=1pt] (4,1)-- (3,2);
\draw [line width=1pt] (1,2)-- (0,4);
\draw [line width=1pt] (0,4)-- (2,4);
\draw [line width=1pt] (2,4)-- (3,2);
\draw (0.6567442569615567,-0.1) node[anchor=north west] {$z_1$};
\draw (0.6567442569615567,4.55122683020287476) node[anchor=north west] {$z_1$};
\draw (-0.418082138548259,1.344983575211053) node[anchor=north west] {$z_2$};
\draw (3.5,0.1) node[anchor=north west] {$z_2$};
\draw (-0.18540472752160073,3.276312254642753) node[anchor=north west] {$z_3$};
\draw (2.7,3.276312254642753) node[anchor=north west] {$z_3$};
\draw (2.003193663375158025,-0.4855801296416017) node[anchor=north west] {$z_4$};
\draw (3.5,1.8) node[anchor=north west] {$z_4$};
\end{tikzpicture}
\caption{Unfolding an element of $\mathcal{H}(2)$ }\label{unfolding}
\end{center}
\end{figure}

It is proved in  \cite{JP} (see also \cite{HMS}) that the systolic volume  has a minimum in $\mathcal{H}(2)$, and it is achieved by a translation surface tiled by six equilateral triangles. Since the systolic volume is a close relative of $\mbox{KVol}$, it is interesting to keep  the results of  \cite{JP} and  \cite{HMS} in mind. 

We have reasons to believe that $\mbox{KVol}$ behaves differently in $\mathcal{H}(2)$, both from the systolic volume in  $\mathcal{H}(2)$, and from  $\mbox{KVol}$ itself  in the moduli space of hyperbolic surfaces of genus $2$ ; that is, $\mbox{KVol}$ does not have a minimum over $\mathcal{H}(2)$. 

We also believe that the infimum of $\mbox{KVol}$ over  $\mathcal{H}(2)$  is $2$. This paper is a first step towards the proof : we find an explicit sequence $L(n,n)$ of surfaces in $\mathcal{H}(2)$, whose $\mbox{KVol}$ tends to $2$ (see Proposition \ref{Lnn}). These surfaces are obtained from very thin, symmetrical, L-shaped templates (see Figure \ref{fig_Lnn}). 

In the companion paper \cite{CKM} we study $\mbox{KVol}$ as a function on the Teichmüller disk (the $SL_2(\R)$-orbit) of surfaces in $\mathcal{H}(2)$ which are tiled by three identical parallelograms (for instance $L(2,2)$), and prove that $\mbox{KVol}$ does have a minimum there, but is not bounded from above. Therefore $\mbox{KVol}$  is not bounded from above as a function on $\mathcal{H}(2)$. In \cite{CKM} we also compute $\mbox{KVol}$ for the translation surface tiled by six equilateral triangles, and find it equals $3$, so it does not minimize $\mbox{KVol}$, neither in $\mathcal{H}(2)$, nor even in its own Teichmüller disk.

\section{$L(n,n)$}\label{sectionLnn}
\subsection{Preliminaries}
Following \cite{Schmithusen}, for any $n \in \N$, $n\geq 2$, we call $L(n+1,n+1)$  the $(2n+1)$-square translation surface of genus two, with one conical point,  depicted in Figure \ref{fig_Lnn}, where the upper and rightmost rectangles are made up with $n$ unit squares. We call $A$ (resp. $B$) the region in $L(n+1,n+1)$ obtained, after identifications,  from the uppermost (resp. rightmost) rectangle, and $C$ the region in $L(n+1,n+1)$ obtained, after identifications,  from the bottom left square. Both $A$ and $B$ are annuli with a pair of points identified on the boundary, while $C$ is a square with all four corners identified. We call  $e_1, e_2,$ (resp. $f_1, f_2$)  the closed curves in  $L(n+1,n+1)$  obtained by gluing the endpoints of the horizontal (resp. vertical) sides of $A$ and $B$. The closed curve which sits on the opposite side of $C$ from $e_1$ (resp. $f_1$) is called $e'_1$ (resp. $f'_1$), it is homotopic to $e_1$ (resp. $f_1$) in  $L(n+1,n+1)$. The closed curves in $L(n+1,n+1)$ which correspond to the diagonals of the square $C$ are called $g$ and $h$. 

Figure \ref{local pic} shows a local picture of $L(n+1,n+1)$ around the singular (conical) point $S$, with angles rescaled so the $6\pi$ fit into $2\pi$.

Since $e_1, e_2, f_1,  f_2$ do not meet anywhere but at $S$, the local picture yields the algebraic intersections between any two of $e_1, e_2, f_1,  f_2$, summed up in the following matrix: 
\begin{equation}\label{matrice_intersection}
\begin{array}{ccccc}
\mbox{Int} & e_2& f_1 & e_1 & f_2 \\
e_2            & 0     &   1   &  0   &  -1    \\
f_1            & -1    &   0   &  0   &  0  \\
e_1             & 0     &   0  & 0    &  1   \\
f_2             & 1    &  0    & -1   &  0 
\end{array}
\end{equation}

We call $T_A$ (resp. $T_B$) the flat torus obtained by gluing the opposite sides of the rectangle made with the $n+1$ leftmost squares (resp. with the $n+1$ bottom squares), so the homology of $T_A$ (resp. $T_B$)  is generated by $e_1$ and the concatenation of $f_1$ and $f_2$ (resp. $f_1$ and the concatenation of $e_1$ and $e_2$).

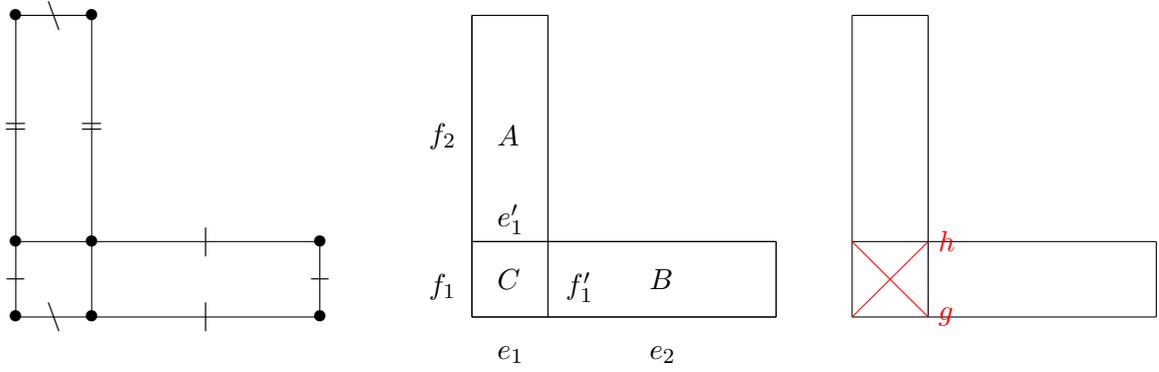
\begin{figure}
\begin{center}

\begin{tikzpicture}
\draw (0,0) -- (4,0);
\draw (0,0) -- (0,4);
\draw (1,0) -- (1,4);
\draw (0,1) -- (4,1);
\draw (4,0) -- (4,1);
\draw (0,4) -- (1,4);
\draw (0,0) node {$\bullet$} ;
\draw (1,0) node {$\bullet$} ;
\draw (4,0) node {$\bullet$} ;
\draw (0,1) node {$\bullet$} ;
\draw (1,1) node {$\bullet$} ;
\draw (4,1) node {$\bullet$} ;
\draw (0,4) node {$\bullet$} ;
\draw (1,4) node {$\bullet$} ;
\draw (0.5,0) node {$\backslash $};
\draw (0.5,4) node {$\backslash $};
\draw (2.5,0) node {$|$};
\draw (2.5,1) node {$|$};
\draw (0,0.5) node {$- $};
\draw (4,0.5) node {$-$};
\draw (0,2.5) node {$=$};
\draw (1,2.5) node {$=$};
\draw (6,0) -- (10,0);
\draw (6,0) -- (6,4);
\draw (7,0) -- (7,4);
\draw (6,1) -- (10,1);
\draw (10,0) -- (10,1);
\draw (6,4) -- (7,4);
\draw (6.2, -0.5) node[right] {$e_1$};
\draw (6.2, 1.3) node[right] {$e'_1$};
\draw (8.2, -0.5) node[right] {$e_2$};
\draw (5.3, 0.4) node[right] {$f_1$};
\draw (7.1, 0.4) node[right] {$f'_1$};
\draw (5.3, 2.4) node[right] {$f_2$};
\draw (6.2, 0.5) node[right] {$C$};
\draw (6.2, 2.4) node[right] {$A$};
\draw (8.2, 0.5) node[right] {$B$};

\draw (11,0) -- (15,0);
\draw (11,0) -- (11,4);
\draw (12,0) -- (12,4);
\draw (11,1) -- (15,1);
\draw (15,0) --  (15,1);
\draw (11,4) -- (12,4);
\draw (6,0) -- (10,0);
\draw (6,0) -- (6,4);
\draw (7,0) -- (7,4);
\draw (6,1) -- (10,1);
\draw (10,0) -- (10,1);
\draw[color=red] (11,0) -- (12,1) node[right, color=red]{$h$};
\draw[color=red] (11,1) -- (12,0) node[right, color=red]{$g$};

\end{tikzpicture}
\caption{$L(n+1,n+1)$}\label{fig_Lnn}
\end{center}
\end{figure}

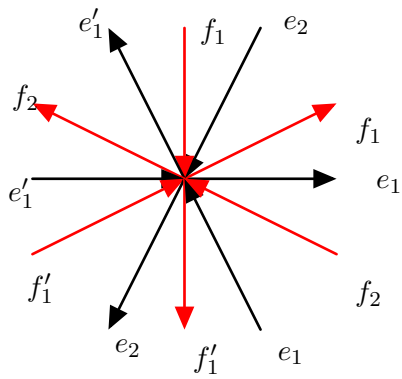
\begin{figure}
\begin{center}
\begin{tikzpicture}[line cap=round,line join=round,>=triangle 45,x=1cm,y=1cm]
\clip(-3.36621647446458,-3.1902) rectangle (3.786216474464581,2.4702);
\draw [->,line width=1pt,color=black] (0,0) -- (2,0);
\draw [->,line width=1pt,color=red] (0,0) -- (2,1);
\draw [->,line width=1pt,color=black] (1,2) -- (0,0);
\draw [->,line width=1pt,color=red] (0,2) -- (0,0);
\draw [->,line width=1pt,color=black] (0,0) -- (-1,2);
\draw [->,line width=1pt,color=red] (0,0) -- (-2,1);
\draw [->,line width=1pt,color=black] (-2,0) -- (0,0);
\draw [->,line width=1pt,color=red] (-2,-1) -- (0,0);
\draw [->,line width=1pt,color=black] (0,0) -- (-1,-2);
\draw [->,line width=1pt,color=red] (0,0) -- (0,-2);
\draw [->,line width=1pt,color=black] (1,-2) -- (0,0);
\draw [->,line width=1pt,color=red] (2,-1) -- (0,0);
\draw (2.3874355848434936,0.17619934102141693) node[anchor=north west] {$e_1$};
\draw (2.107679406919276,0.9408662273476115) node[anchor=north west] {$f_1$};
\draw (1.165833607907744,2.2836958813838555) node[anchor=north west] {$e_2$};
\draw (0.06545930807248848,2.2557202635914337) node[anchor=north west] {$f_1$};
\draw (-1.5291509060955513,2.3955983525535425) node[anchor=north west] {$e'_1$};
\draw (-2.4057202635914328,1.3791509060955522) node[anchor=north west] {$f_2$};
\draw (-2.452346293245469,0.13889851729818795) node[anchor=north west] {$e'_1$};
\draw (-2.2098909390444805,-1.1200042833607908) node[anchor=north west] {$f'_1$};
\draw (-1.0535654036243816,-2) node[anchor=north west] {$e_2$};
\draw (-0.027792751235584022,-2.061850082372323) node[anchor=north west] {$f'_1$};
\draw (1.0912319604612861,-2.099150906095552) node[anchor=north west] {$e_1$};
\draw (2.107679406919276,-1.2225815485996707) node[anchor=north west] {$f_2$};
\end{tikzpicture}
\caption{Local picture around the conical point}\label{local pic}
\end{center}
\end{figure}

\begin{lemma}\label{pas_inter_e1}
The only closed geodesics in $L(n+1,n+1)$ which do not intersect $e_1$ nor $f_1$ are, up to homotopy,  $e_1$, $f_1$, $g$, and $h$.
\end{lemma}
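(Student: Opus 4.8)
The plan is to organise closed geodesics by their slope and to exploit the flat cylinder structure of $L(n+1,n+1)$. Recall that a closed geodesic on a translation surface either misses the conical point $S$, in which case it is the core of a flat cylinder and is determined up to homotopy by that cylinder, or else it passes through $S$ and is a cyclic concatenation of saddle connections. The horizontal direction decomposes the surface into the cylinder $A$ (circumference $1$, core $\simeq e_1$) and a wide cylinder of circumference $n+1$ (core $\simeq e_1+e_2$); the vertical direction decomposes it into $B$ (core $\simeq f_1$) and a wide cylinder of circumference $n+1$ (core $\simeq f_1+f_2$, this being the generator of $T_A$). The key geometric input is a confinement property of $A$ and $B$.

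First I would prove that confinement property. The two vertical sides of $A$ are glued to one another, while its top edge is, through the identification of the bottom of $C$, exactly the curve $e_1$; hence $A$ is a cylinder with boundary circles $e'_1$ (shared with $C$) and $e_1$. Consequently any non-horizontal geodesic arc in the interior of $A$ has strictly monotone vertical coordinate and cannot leave through the glued vertical sides; entering from $C$ across $e'_1$ it must therefore exit across the top, that is, it crosses $e_1$. Exchanging the roles of the two coordinates, a non-vertical geodesic entering $B$ must cross $f_1$. Thus a closed geodesic of slope different from $0$ and $\infty$ that meets neither $e_1$ nor $f_1$ never enters $A$ nor $B$; since $A$, $B$ and $C$ meet pairwise only along $e'_1$, $f'_1$ and the point $S$, such a geodesic is contained in $\overline{C}$.

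Next I would analyse geodesics inside the square $C$. A straight segment in $C$ touching $\partial C$ only at the corners (all identified to $S$) must join two corners, so it is either one of the four sides, giving $e_1$, $f_1$, $e'_1\simeq e_1$, $f'_1\simeq f_1$, or one of the two diagonals, giving $g$ and $h$; imposing slope $\neq 0,\infty$ leaves exactly $g$ and $h$. It then remains to treat the horizontal and vertical directions directly. A horizontal closed geodesic missing $S$ is a core of $A$ (homotopic to $e_1$) or of the wide horizontal cylinder, and the latter crosses $f_1$ at an interior point; symmetrically, in the vertical direction only the class of $f_1$ survives. The horizontal and vertical saddle‑connection loops through $S$, such as $e_2$ and $f_2$, are excluded because they cross $f_1$, resp. $e_1$, at $S$, in accordance with the intersection matrix (\ref{matrice_intersection}), where for instance $\mathrm{Int}(e_2,f_1)=1\neq 0$.

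The delicate point, and the one I expect to be the main obstacle, is the bookkeeping of intersections at the conical point. Every curve that survives the argument passes through $S$, so the phrase \emph{do not intersect $e_1$ nor $f_1$} must be read as \emph{have no transverse crossing, including at $S$}, and whether two geodesics through $S$ cross there is governed by the cyclic order of their prongs recorded in Figure \ref{local pic} (the $6\pi$ cone rescaled into $2\pi$). Using that local picture I would verify, on the one hand, that $g$ and $h$ genuinely fail to cross $e_1$ and $f_1$ at $S$, so that they do belong to the list, and on the other hand that a geodesic which does not cross $e_1$ at $S$ cannot pass from one side of $e_1$ to the other there; this last fact is precisely what makes the confinement argument rigorous for geodesics passing through $S$ and eliminates the remaining concatenations of saddle connections. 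Assembling these cases shows that $e_1$, $f_1$, $g$, $h$ are the only closed geodesics, up to homotopy, meeting neither $e_1$ nor $f_1$.
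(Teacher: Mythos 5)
Your proposal is correct and takes essentially the same route as the paper: confine a closed geodesic avoiding $e_1$ and $f_1$ to the pieces $A$, $B$, $C$ via the flat cylinder structure (a non-horizontal arc entering $A$ must traverse it and exit through $e_1$, and symmetrically for $B$), then classify the closed geodesics inside the square $C$ as its sides and diagonals, yielding $e_1$, $f_1$, $g$, $h$ up to homotopy. The extra bookkeeping you flag at the conical point $S$ (reading ``intersect'' as transverse crossing and checking prong positions in the local picture) is a refinement of the same argument; the paper's much terser proof passes over this point in silence.
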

\begin{proof}
Let $\gamma$ be such a closed geodesic. It cannot enter, nor leave, $A$, $B$, nor $C$. If it is contained in $A$, and does not intersect $e_1$, then it must be homotopic to $e_1$, which is the soul of the annulus from which $A$ is obtained by identifying two points on the boundary. Likewise, if it is contained in $B$, and does not intersect $f_1$, then it must be homotopic to $f_1$. Finally, if $\gamma$ is not contained in $A$ nor in $B$, it must be contained in $C$. The only closed geodesics contained in $C$ are the sides and diagonals of the square from which $C$ is obtained, which are $e_1$, $e'_1$,  $f_1$, $f'_1$, $g$, and $h$.
\end{proof}

\begin{lemma}\label{inter_e1}
For any closed geodesic $\gamma$  in $L(n+1,n+1)$, we have  $l(\gamma) \geq n |\mbox{Int}(\gamma, e_1)|$.
\end{lemma}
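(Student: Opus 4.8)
The plan is to exploit the cylinder structure of the region $A$. Recall that $A$ is a flat horizontal cylinder of circumference $1$ and height $n$, obtained from the $n$ unit squares of the upper rectangle by gluing the two vertical sides labelled $f_2$; its core curves are the horizontal circles $c_y = ([0,1]\times\{y\})/\!\sim$ for $1 < y < n+1$, and each $c_y$ is freely homotopic in $A$ to the boundary circle $e_1$, which by Lemma \ref{pas_inter_e1} is the soul of $A$. In particular $\mbox{Int}(\gamma,e_1) = \mbox{Int}(\gamma,c_y)$ for every such $y$, since homologous curves have the same algebraic intersection with $\gamma$.

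First I would fix a generic height $y_0 \in (1,n+1)$ so that $\gamma$ meets $c_{y_0}$ transversally in finitely many points, and then analyse the trace $\gamma \cap \overline{A}$. Since a closed geodesic on a translation surface runs in a fixed direction $(\cos\theta,\sin\theta)$, its vertical speed $\sin\theta$ is a constant; if $\gamma$ is horizontal it misses $c_{y_0}$ up to homotopy, so $\mbox{Int}(\gamma,e_1)=0$ and the inequality is trivial, and I may assume $\sin\theta \neq 0$. Then $y$ is strictly monotonic along each arc of $\gamma$ lying in $A$, so every such arc is a straight chord joining the two boundary circles $e_1$ (at $y=n+1$) and $e'_1$ (at $y=1$): because $y$ has constant nonzero sign of variation, a chord cannot enter and leave through the same boundary. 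Two consequences follow: each chord has vertical extent exactly $n$, hence Euclidean length at least $n$; and each chord meets the intermediate circle $c_{y_0}$ in exactly one point.

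Writing $N$ for the number of these chords, the second consequence gives that $\gamma$ meets $c_{y_0}$ in exactly $N$ points (every intersection point of $\gamma$ with $c_{y_0}$ lies on a chord, since $c_{y_0}\subset A$), whence $|\mbox{Int}(\gamma,e_1)| = |\mbox{Int}(\gamma,c_{y_0})| \leq N$. The first consequence gives $l(\gamma) \geq l(\gamma\cap A) \geq Nn$. Combining the two bounds yields $l(\gamma) \geq n\,|\mbox{Int}(\gamma,e_1)|$, as desired. The one place that needs care is the behaviour at the conical point $S$, where a chord could a priori terminate on $\partial A$ at $S$ rather than crossing $c_{y_0}$ cleanly; I expect to dispose of this with the generic choice of $y_0$ together with a short continuity argument, using that the algebraic intersection number is a homotopy invariant while both sides of the inequality vary continuously under small perturbations of $\gamma$.
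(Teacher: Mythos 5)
Your proof is correct and is essentially the paper's own argument: the paper's one-sentence proof observes that each intersection with $e_1$ forces $\gamma$ to traverse the cylinder $A$ from boundary to boundary, costing length at least $n$ per intersection, which is exactly your chord-counting bound, made rigorous via the core curve $c_{y_0}$ and monotonicity of the height along a non-horizontal geodesic. Your closing worry about the conical point is actually moot, since $S$ lies on $\partial A$ while $c_{y_0}$ lies in the interior of the cylinder, so every chord crosses $c_{y_0}$ transversally in its interior and no genericity or continuity argument is needed.
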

\begin{proof}
For each intersection with $e_1$, $\gamma$ must go through $A$, from boundary to boundary.
\end{proof}
Obviously a similar lemma holds with $f_1$ instead of $e_1$. For $g$ and $h$ the proof is a bit different : 
\begin{lemma}\label{inter_g}
For any closed geodesic $\gamma$  in $L(n+1,n+1)$, we have  $l(\gamma) \geq n |\mbox{Int}(\gamma, g)|$.
\end{lemma}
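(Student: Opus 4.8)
The plan is to reduce the statement to Lemma~\ref{inter_e1} (together with its analogue for $f_1$) by means of a homological identity, and then to sharpen the resulting length estimate by exploiting the fact that the crossings with $e_1$ and the crossings with $f_1$ are forced to occur in the two \emph{disjoint} arms $A$ and $B$.

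First I would compute the homology class of $g$. Since $g$ is one of the diagonals of the square $C$, it is a path joining two corners of $C$, both of which are the conical point $S$. Homotoping it, rel endpoints, across the contractible square $C$ onto two consecutive sides gives $[g] = [e_1] - [f_1]$ in $H_1(L(n+1,n+1),\Z)$ (the orientation, hence the sign, will be irrelevant below). Because algebraic intersection depends only on homology classes, this yields
\begin{equation*}
\mbox{Int}(\gamma, g) = \mbox{Int}(\gamma, e_1) - \mbox{Int}(\gamma, f_1),
\end{equation*}
and therefore $|\mbox{Int}(\gamma,g)| \leq |\mbox{Int}(\gamma,e_1)| + |\mbox{Int}(\gamma,f_1)|$ by the triangle inequality.

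Next I would upgrade Lemma~\ref{inter_e1} into an additive estimate. Rerunning its argument, each of the (algebraically counted) $|\mbox{Int}(\gamma,e_1)|$ crossings of $e_1$ forces $\gamma$ to cross the cylinder $A$ from one boundary component to the other, a trip of length at least $n$; all of these sub-arcs lie inside $A$, so the portion of $\gamma$ contained in $A$ has length at least $n\,|\mbox{Int}(\gamma,e_1)|$. The same reasoning, applied to $f_1$ and to the cylinder $B$, shows that the portion of $\gamma$ contained in $B$ has length at least $n\,|\mbox{Int}(\gamma,f_1)|$. Since $A$ and $B$ are disjoint regions of $L(n+1,n+1)$, these two contributions add up to at most $l(\gamma)$, whence
\begin{equation*}
l(\gamma) \geq n\bigl(|\mbox{Int}(\gamma,e_1)| + |\mbox{Int}(\gamma,f_1)|\bigr) \geq n\,|\mbox{Int}(\gamma,g)|,
\end{equation*}
which is the desired inequality.

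The step I expect to be the main obstacle is precisely this additivity, and it is where the proof differs from that of Lemma~\ref{inter_e1}. One must be certain that the arcs realizing the $e_1$-crossings genuinely lie in $A$ while those realizing the $f_1$-crossings lie in $B$, so that their lengths are not double counted. The naive bound $l(\gamma)\geq n\max\bigl(|\mbox{Int}(\gamma,e_1)|,|\mbox{Int}(\gamma,f_1)|\bigr)$ loses a factor $2$ in the worst case (when the two intersection numbers have opposite signs, so that $|\mbox{Int}(\gamma,g)|=|\mbox{Int}(\gamma,e_1)|+|\mbox{Int}(\gamma,f_1)|$), and it is exactly the disjointness of the two arms that recovers the missing factor. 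A clean way to make this rigorous is to estimate the length of $\gamma\cap A$ and of $\gamma\cap B$ separately, bounding each piece by the corresponding intersection number as in Lemma~\ref{inter_e1}, and then to add the two estimates.
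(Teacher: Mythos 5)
Your proof is correct, and it takes a genuinely different route from the paper's. The paper argues directly on the geometric crossings of $\gamma$ with $g$: since the only geodesic segments contained in $C$ with both endpoints on $g$ are segments of $g$ or $h$, between two consecutive crossings of $g$ the curve $\gamma$ must traverse $A$ or $B$, hence pays length at least $n$ per crossing; the exceptional cases $\gamma=g,h$ are handled by computing $\mbox{Int}(g,g)=\mbox{Int}(g,h)=0$ (using, incidentally, the same identity $[g]=[e_1]-[f_1]$ that drives your whole proof). You, by contrast, never examine crossings with $g$ at all: bilinearity of $\mbox{Int}$ reduces the statement to crossings with $e_1$ and $f_1$, and the loss inherent in the triangle inequality is recovered by localizing Lemma \ref{inter_e1} and its $f_1$-analogue into $l(\gamma\cap A)\geq n\,|\mbox{Int}(\gamma,e_1)|$ and $l(\gamma\cap B)\geq n\,|\mbox{Int}(\gamma,f_1)|$, which add because $A$ and $B$ have disjoint interiors. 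That localization is sound: it is exactly what the one-line proof of Lemma \ref{inter_e1} establishes, since each crossing of $e_1$ is the endpoint of a distinct maximal sub-arc of $\gamma$ inside $A$, and such an arc must run from one boundary circle of the cylinder to the other (a straight segment in a flat cylinder cannot return to the boundary circle it started from), hence has length at least $n$. Your diagnosis that the naive combination of the two lemmata via a maximum loses a factor $2$ in the worst case, and that disjointness of the two arms is what restores it, is also accurate; this is the one step where your argument genuinely needs more than Lemma \ref{inter_e1} as a black box. Comparing the two: the paper's proof is self-contained and geometric, treating $g$ and $h$ symmetrically; yours absorbs the special cases $\gamma=g,h$ automatically (both sides of your localized inequalities vanish), transfers verbatim to $h$ via $[h]=[e_1]+[f_1]$, and isolates the homological mechanism behind the estimate. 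Both proofs are equally (in)attentive to geodesics passing through the conical point, so yours matches the paper's level of rigor.
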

\begin{proof}
First, observe that between two consecutive intersections with $g$, $\gamma$ must go through either $A$ or $B$, unless $\gamma$ is $g$ itself, or $h$ : indeed, the only geodesic segments contained in $C$ with endpoints on $g$ are segments of $g$, or $h$.  Obviously $\mbox{Int}(g,g)=0$, and from the intersection matrix (\ref{matrice_intersection}), knowing that $\left[g\right] = \left[ e_1 \right]-\left[  f_1 \right]$, $\left[ h \right] = \left[ e_1 \right] + \left[  f_1 \right]$, we see that $\mbox{Int}(g,h)=0$.

Thus, either $\mbox{Int}(\gamma, g)=0$, or each intersection must be paid for with a trek through $A$ or $B$, of length at least $n$.
\end{proof}
Obviously a similar lemma holds with $h$ instead of $g$. Note that Lemmata \ref{pas_inter_e1}, \ref{inter_e1}, \ref{inter_g} imply that the only geodesics in $L(n+1,n+1)$ which are shorter than $n$ are $e_1$, $f_1$, $g$, $h$, and closed geodesics homotopic to $e_1$ or $f_1$.

\begin{lemma}\label{sommes}
Let $I,J$ be positive integers, take $a_{ij}, i=1,\ldots , I, j=1,\ldots, J$ in $\R_+$, and $b_1, \ldots ,b_I, c_1, \ldots ,c_J$ in $\R_+^*$. Then we have
\[
\frac{  \sum_{i,j} a_{ij}    }{ \left(\sum_{i =1}^I b_i \right)  \left(\sum_{j =1}^J c_j \right) } \leq \max_{i,j} \frac{a_{ij}}{b_i c_j}.
\]
\end{lemma}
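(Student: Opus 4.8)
The plan is to use the elementary \emph{mediant} principle: a weighted average of ratios never exceeds the largest of those ratios. First I would set $M := \max_{i,j} \frac{a_{ij}}{b_i c_j}$, which is well defined because each $b_i$ and each $c_j$ is strictly positive, so none of the denominators $b_i c_j$ vanishes. By the very definition of $M$, for every pair $(i,j)$ we have the pointwise bound $a_{ij} \leq M\, b_i c_j$. This is the only substantive inequality in the whole argument, and it is immediate once $M$ is defined.

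Next I would sum this pointwise bound over all $i$ and $j$. On the right-hand side the double sum factors, $\sum_{i,j} b_i c_j = \left(\sum_{i=1}^I b_i\right)\left(\sum_{j=1}^J c_j\right)$, so that $\sum_{i,j} a_{ij} \leq M \left(\sum_{i=1}^I b_i\right)\left(\sum_{j=1}^J c_j\right)$. Since every $b_i$ and $c_j$ lies in $\R_+^*$, the product $\left(\sum_{i=1}^I b_i\right)\left(\sum_{j=1}^J c_j\right)$ is strictly positive, so I may divide both sides by it without reversing the inequality. The result is exactly the claimed bound $\frac{\sum_{i,j} a_{ij}}{\left(\sum_i b_i\right)\left(\sum_j c_j\right)} \leq M$.

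There is essentially no obstacle here: the statement collapses to the pointwise estimate, a summation, and a single division. The only point requiring any care — and the reason the hypotheses insist on $b_i, c_j \in \R_+^*$ rather than merely $\R_+$ — is that the denominators must be nonzero, both so that the individual ratios $a_{ij}/(b_i c_j)$ and the final quotient are defined, and so that dividing through by the positive quantity $\left(\sum_i b_i\right)\left(\sum_j c_j\right)$ preserves the direction of the inequality.
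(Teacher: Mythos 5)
Your proof is correct and follows essentially the same route as the paper: bound each $a_{ij}$ by $\max_{i,j}\frac{a_{ij}}{b_i c_j}\, b_i c_j$, sum over all pairs, and factor the double sum $\sum_{i,j} b_i c_j$ into the product of the two single sums. The only cosmetic difference is that the paper reorders indices so the maximum sits at $(1,1)$ and keeps everything multiplicative ($a_{ij} b_1 c_1 \leq a_{11} b_i c_j$) instead of naming the maximum $M$ and dividing at the end.
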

\begin{proof}
Re-ordering, if needed, the $a_{ij}, b_i, c_j$, we may assume 
\[
 \frac{a_{ij}}{b_i c_j} \leq \frac{a_{11}}{b_1 c_1} \  \forall i=1,\ldots, I, j=1,\ldots, J.
 \]
  Then $a_{ij}b_1 c_1 \leq a_{11} b_i c_j $ $\forall i=1,\ldots ,I, j=1,\ldots, J$, so 
\[
b_1 c_1 \sum_{i,j}a_{ij} \leq a_{11} \sum_{i,j} b_i c_j =a_{11} \left(\sum_{i =1}^I b_i \right)  \left(\sum_{j =1}^J c_j \right).
\]
\end{proof}

\subsection{Estimation of $\mbox{KVol}(L(n,n))$}
\begin{proposition}\label{Lnn}
\[
\lim_{n \rightarrow +\infty}  \mbox{KVol}(L(n+1,n+1)) = 2.
  \]
\end{proposition}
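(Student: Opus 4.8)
The plan is to estimate the ratio in (\ref{defKVol}) from both sides, using crucially that $\mbox{Int}(\alpha,\beta)$ depends only on the homology classes $[\alpha],[\beta]$. Thus in the supremum defining $\mbox{KVol}$ we may always replace a curve by the shortest closed geodesic in its homology class, and since $\Vol(L(n+1,n+1))=2n+1$, it is enough to prove
\[
\sup_{\alpha,\beta}\frac{|\mbox{Int}(\alpha,\beta)|}{l(\alpha)l(\beta)}=\frac{1}{n}+O\!\left(\frac{1}{n^{2}}\right),
\]
which then yields $\mbox{KVol}(L(n+1,n+1))=(2n+1)\bigl(\tfrac1n+O(\tfrac1{n^2})\bigr)\to 2$.

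For the lower bound I would exhibit the explicit pair $(e_1,f_2)$. From the intersection matrix (\ref{matrice_intersection}) we have $\mbox{Int}(e_1,f_2)=1$, while $l(e_1)=1$ and $l(f_2)=n$; moreover $f_2$ is a shortest representative of its class, since $\mbox{Int}(f_2,e_1)\neq 0$ forces length $\geq n$ by Lemma \ref{inter_e1}. This gives $\mbox{KVol}\geq (2n+1)/n=2+\tfrac1n$, hence $\liminf\geq 2$.

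For the upper bound I would split the supremum according to the dichotomy recorded just after Lemma \ref{inter_g}: every closed geodesic is either homotopic to one of $e_1,f_1,g,h$, or has length at least $n$. If one of the two curves, say $\alpha$, is homotopic to $e_1,f_1,g$ or $h$, then Lemmata \ref{inter_e1} and \ref{inter_g} give $l(\beta)\geq n|\mbox{Int}(\beta,\alpha)|$, so $\frac{|\mbox{Int}(\alpha,\beta)|}{l(\alpha)l(\beta)}\leq \frac{1}{n\,l(\alpha)}\leq\frac1n$ (using $l(\alpha)\geq 1$), with equality approached exactly by $(e_1,f_2)$. If instead both $\alpha,\beta$ have length at least $n$, I would cut each curve into arcs at its crossings of $e_1$ and $f_1$, attaching each excursion through $C$ to the adjacent cylinder crossing, so that every arc has length $\geq n$ (a geodesic entering $A$ or $B$ must cross it from boundary to boundary). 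A curve of length $l$ then has at most about $l/n$ arcs, and two geodesic arcs meet in a bounded number of points, so Lemma \ref{sommes} applied with $b_i=l(\alpha_i)$, $c_j=l(\beta_j)$ bounds the ratio by $\max_{i,j}\frac{a_{ij}}{l(\alpha_i)l(\beta_j)}=O(1/n^{2})$. Taking the maximum of the two regimes completes the estimate.

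The main obstacle is the second case. A purely homological estimate is too lossy here: classes that compensate their holonomy, such as $n[e_1]-[e_2]$, can have large algebraic intersection — for instance $\mbox{Int}(n e_1-e_2,\,n f_1-f_2)=-(2n+1)$ — while the cylinder lemmata only guarantee length $\geq n$. The resolution is that such a class is in fact forced to be long (here length $\geq 2n+1$, since it must wind through both cylinders $A$ and $B$), and the cleanest uniform way to capture this is the arc-counting argument above rather than a term-by-term bound on the symplectic form. The delicate point in making that argument rigorous is the control of the short arcs of a long geodesic inside $C$ — precisely the pieces that generate the short geodesics $g,h$ — which is exactly why the two length regimes must be treated separately.
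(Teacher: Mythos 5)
Your overall architecture (lower bound from the pair $(e_1,f_2)$, upper bound split into a short-curve regime and a long-curve regime, with the long curves cut into arcs and Lemma \ref{sommes} applied) matches the paper's, and the lower bound and the first regime are handled correctly. The gap is in the second regime, and it is fatal to the argument as written: the claim that ``two geodesic arcs meet in a bounded number of points'' is false on a flat surface. Straight arcs can wrap around a cylinder and cross each other a number of times comparable to the product of their lengths divided by the area. Concretely, take two arcs crossing the cylinder $A$ (circumference $1$, height $n$) from boundary to boundary, one with horizontal drift $+n$ and one with horizontal drift $-n$: each has length about $n\sqrt{2}$, they meet about $2n$ times, and the corresponding quotient $a_{ij}/\bigl(l(\alpha_i)l(\beta_j)\bigr)$ is of order $1/n$, not $O(1/n^2)$. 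So not only does the justification fail, but the bound $O(1/n^2)$ you claim for this regime is itself false: the true order of the supremum over pairs of long geodesics is $1/n$. Your ``main obstacle'' paragraph correctly identifies that purely homological bounds are lossy, but the proposed fix (arc counting with bounded $a_{ij}$) cannot work.

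The limit $2$ still survives, because one only needs the long-curve regime to contribute $\frac{1}{n}(1+o(1))$ rather than $O(1/n^2)$; but obtaining even that requires a different idea, and this is where the paper does its real work. The paper's cut points are chosen at the transitions between crossings of $e_1,e'_1$ and crossings of $f_1,f'_1$, precisely so that each piece stays inside one of the two flat tori $T_A$, $T_B$ of area $n+1$. Each piece $\alpha_i$ (resp.\ $\beta_j$) is then closed up, using a subarc of $f_1$ or $f'_1$ (resp.\ $e_1$ or $e'_1$) of length at most $1$, into a closed geodesic $\hat\alpha_i$ (resp.\ $\hat\beta_j$) of that torus, and the sharp torus inequality $|\mbox{Int}(\hat\alpha_i,\hat\beta_j)|/\bigl(l(\hat\alpha_i)l(\hat\beta_j)\bigr)\le \frac{1}{n+1}$ from \cite{MM} is imported. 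The two error terms --- at most one extra intersection created by the closing arcs, and length inflation by a factor at most $1+\frac1n$ since each piece has length at least $n$ --- are absorbed, giving $\bigl(\frac{1}{n+1}+\frac{1}{n^2}\bigr)\bigl(\frac{n+1}{n}\bigr)^2=\frac1n+o\bigl(\frac1n\bigr)$. That conversion of arcs into closed curves on a torus, so that a global homological bound rather than any pointwise count of crossings of straight arcs controls $a_{ij}$, is exactly the step your proposal is missing.
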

\begin{proof}
First observe that $ \mbox{Vol}(L(n+1,n+1)) = 2n+1$, $l(e_1)=1$, $l(f_2)=n$, $\mbox{Int}(e_1, f_2)=1$, so 
\[
  \mbox{KVol}(L(n+1,n+1)) \geq 2+\frac{1}{n}.
  \]
To bound $ \mbox{KVol}(L(n+1,n+1))$ from above, we take two closed geodesics $\alpha$ and $\beta$ ; by Lemmata \ref{inter_e1}, \ref{inter_g}, if either $\alpha$ or $\beta$ is homotopic to $e_1$, $f_1$, $g$, or $h$, then 
\[
\frac{\mbox{Int}(\alpha,\beta)}{l(\alpha)l(\beta)} \leq \frac{1}{n},
\]
so from now on we assume that neither $\alpha$ or $\beta$ is homotopic to $e_1$, $f_1$, $g$, $h$. We cut $\alpha$ and $\beta$ into pieces using the following procedure : we consider the sequence of intersections of $\alpha$ with $e_1, e'_1, f_1, f'_1$, in cyclical order, and we cut $\alpha$ at each intersection with $e_1$ or  $e'_1$ which is followed by an intersection with $ f_1$ or $ f'_1$, and at each intersection with $f_1$ or  $f'_1$ which is followed by an intersection with $ e_1$ or $ e'_1$. We proceed likewise with $\beta$. We call $\alpha_i, i =1, \ldots, I$, and $\beta_j, j =1, \ldots ,J$, the pieces of $\alpha$ and $\beta$, respectively. 

Note that 
\[
l(\alpha) = \sum_{i=1}^{I} l(\alpha_i), \   l(\beta) = \sum_{j=1}^{J} l(\beta_j), \mbox{ and }
\]
\[ |\mbox{Int}(\alpha,\beta)| \leq  \sum_{i,j} |\mbox{Int}(\alpha_i,\beta_j)|,
\]
so Lemma \ref{sommes} says that 
\[
\frac{|\mbox{Int}(\alpha,\beta)|}{l(\alpha) l(\beta) } \leq \max_{i,j} \frac{ |\mbox{Int}(\alpha_i,\beta_j)|}{l(\alpha_i) l(\beta_j) }.
\]

We view each piece $\alpha_i$ (resp. $\beta_j$) as a geodesic arc in the torus $T_A$ (resp.  $T_B$), with endpoints on the image in $T_A$ (or $T_B$) of $f_1$ or $f'_1$ (resp. $e_1$ or $e'_1$), which is a geodesic arc of length $1$, so we can close each $\alpha_i$ (resp. $\beta_j$) with a piece of $f_1$ or $f'_1$ (resp. $e_1$ or $e'_1$), of length $\leq 1$. We choose a closed geodesic  $\hat{\alpha}_i$ (resp. $\hat{\beta}_j$) in $T_A$ (resp. $T_B$) which is homotopic  to the closed curve thus obtained. We have $l(\hat{\alpha}_i) \leq l(\alpha_i)+1$, $l(\hat{\beta}_j) \leq l(\beta_j)+1$, so 
\[
\frac{1}{l(\hat{\alpha}_i)l(\hat{\beta}_j) } \geq \frac{1}{(l(\alpha_i)+1)(l(\beta_j)+1)}.
\]
Now recall that $l(\alpha_i), l(\beta_j) \geq n$,  so $l(\alpha_i)+1 \leq (1+\frac{1}{n}) l(\alpha_i)$, whence
\[
\frac{1}{l(\hat{\alpha}_i)l(\hat{\beta}_j) } \geq \frac{1}{l(\alpha_i)l(\beta_j)} \left( \frac{n}{n+1} \right)^2.
\]
Next, observe that $ |\mbox{Int}(\alpha_i,\beta_j)| \leq  |\mbox{Int}(\hat{\alpha_i},\hat{\beta_j})|+1$, because $\hat{\alpha}_i$ (resp.   $\hat{\beta}_j$) is homologous to a closed curve which contains $\alpha_i$  (resp. $\beta_j$) as a subarc, and the extra arcs cause at most one extra intersection, depending on whether or not the endpoints of $\alpha_i$ and $\beta_j$ are intertwined. So,
\[
 \frac{ |\mbox{Int}(\alpha_i,\beta_j)|}{l(\alpha_i) l(\beta_j) } \leq \frac{ |\mbox{Int}(\hat{\alpha_i},\hat{\beta_j})|+1}{l(\hat{\alpha}_i)l(\hat{\beta}_j) }  \left( \frac{n+1}{n} \right)^2
 \leq  \left( \frac{ |\mbox{Int}(\hat{\alpha_i},\hat{\beta_j})|}{l(\hat{\alpha}_i)l(\hat{\beta}_j) }+ \frac{1}{n^2} \right)  \left( \frac{n+1}{n} \right)^2   ,  
 \]
where the last inequality stands because $l(\hat{\alpha}_i)\geq n$, $ l(\hat{\beta}_j) \geq n$, since $\hat{\alpha}_i$ and $\hat{\beta}_j$ both have to go through a cylinder $A$ or $B$ at least once.
Finally, since $\hat{\alpha}_i$ and  $\hat{\beta}_j$ are closed geodesics on a flat torus of volume $n+1$, we have (see \cite{MM})
\[
 \frac{ |\mbox{Int}(\hat{\alpha_i},\hat{\beta_j})|}{l(\hat{\alpha}_i)l(\hat{\beta}_j) } \leq \frac{1}{n+1}, \mbox{ so }
 \]
 \[
  \frac{ |\mbox{Int}(\alpha_i,\beta_j)|}{l(\alpha_i) l(\beta_j) } \leq  \left(  \frac{1}{n+1}+ \frac{1}{n^2} \right)  \left( \frac{n+1}{n} \right)^2 = \frac{1}{n} + o\left(\frac{1}{n}\right)  , 
  \]
which yields the result, recalling that $ \mbox{Vol}(L(n+1,n+1)) = 2n+1$.
\end{proof}

\bigskip

\noindent \textbf{adresses} : 

\noindent Smaïl Cheboui :  USTHB, Facult\'e de Math\'ematiques, Laboratoire de Syst\`emes Dynamiques, 16111 El-Alia BabEzzouar - Alger, Alg\'erie, 

\noindent email : smailsdgmath@gmail.com

\noindent Arezki Kessi :       USTHB, Facult\'e de Math\'ematiques, Laboratoire de Syst\`emes Dynamiques, 16111 El-Alia BabEzzouar - Alger, Alg\'erie, 

\noindent email: arkessi@yahoo.fr

\noindent Daniel Massart : Institut Montpelli\'erain Alexander Grothendick, CNRS Universit\'e de  Montpellier,  France 

\noindent email : daniel.massart@umontpellier.fr (corresponding author)


\begin{thebibliography}{150}

\bibitem{CKM}  S. Cheboui, A. Kessi, D. Massart  Algebraic intersection for translation surfaces in the Teichmüller disk of $L(2,2)$, work in progress


\bibitem{HMS} Herrlich, F., Muetzel B. and Schmith\H{u}sen G.: Systolic geometry of translation surfaces, (2018) arXiv:1809.10327.


\bibitem{HL}
Hubert, Pascal; Lelièvre, Samuel
Prime arithmetic Teichmüller discs in $\mathcal{H}(2)$. 
Israel J. Math. 151 (2006), 281-321. 

\bibitem{JP} Judge, Chris ; Parlier, Hugo  The maximum number of systoles for genus two Riemann surfaces with abelian differentials.
    Comment. Math. Helv. 94 (2019), no. 2, 399–437.
    
\bibitem{Lelievre} Lelièvre, Samuel  Arithmetic Veech surfaces in genus two: Teichmüller discs, Veech groups, Siegel–Veech constants. Ph. D. thesis, Université Rennes 1, 2004.

\bibitem{MM} Massart, Daniel; Muetzel, Bjoern On the intersection form of surfaces. Manuscripta Math. 143 (2014), no. 1-2, 19-49. 

\bibitem{Mc} McMullen, Curtis T. Teichmüller curves in genus two: discriminant and spin. Math. Ann. 333 (2005), no. 1, 87–130. 

\bibitem{Schmithusen} Schmithüsen, Gabriela An algorithm for finding the Veech group of an origami. Experiment. Math. 13 (2004), no. 4, 459–472.

\end{thebibliography}
\end{document}